\newtheorem{theorem}{Theorem}[section]
\newtheorem{example}[theorem]{Example}
\newtheorem{remark}[theorem]{Remark}
\newtheorem{corollary}[theorem]{Corollary}
\newtheorem{proposition}[theorem]{Proposition}
\def\adots{\mathinner{\mkern2mu\raise0pt\hbox{.}  
\mkern2mu\raise4pt\hbox{.}\mkern1mu
\raise7pt\vbox{\kern7pt\hbox{.}}\mkern1mu}}
\def\IC{{\mathbb C}}
\def\cE{{\mathcal E}}
\def\cS{{\mathcal S}}
\def\cV{{\mathcal V}}
\def\rank{{\rm rank\,}}
\def\Ker{{\rm Ker}}
\def\ker{{\rm ker}}
\def\vect{{\rm vec}}
\title{Linear rank preservers of tensor products of rank one matrices}
 \author{Zejun Huang, \quad Shiyu Shi,\quad and \quad Nung-Sing Sze}
\date{}
\begin{document}
\maketitle

\noindent {\bf Abstract}. Let $n_1,\ldots,n_k $ be  integers larger than or equal to 2.  We
characterize  linear maps  $\phi: M_{n_1\cdots n_k}\rightarrow M_{n_1\cdots n_k}$ such that
 \begin{equation*}
\rank(\phi(A_1\otimes \cdots \otimes A_k))=1\quad\hbox{whenever}\quad\rank (A_1\otimes \cdots \otimes A_k)=1 \quad \hbox{for all}\quad  A_i \in M_{n_i},\, i = 1,\dots,k.
\end{equation*}
Applying this result, we extend two recent results on linear maps that preserving the rank of special classes of matrices.

\noindent{\bf AMS classifications}: 15A03, 15A69.

\noindent{\bf Keywords}: Linear preserver, rank, tensor product, partial transpose, realignment

\section{Introduction and statement of main results}

Let $n\ge 2$ be positive integers. Denote by  $M_{  n}$  the set of $n\times n$ complex matrices  and $\IC^n$ the set of complex column vectors with $n$ components.
Linear preserver problems concern  the study of linear maps on matrices or operators with some special properties, which has a long history.  In 1897, Frobenius \cite{F} showed that a linear operator
$\det(\phi(A)) = \det(A)$ for all $A\in M_n$
if and only if  there are $M, N\in M_n$ with $\det(MN) = 1$ such that
$\phi$ has the form
\begin{equation*} \label{standard}
A \mapsto MAN \quad \hbox{ or } \quad A \mapsto MA^tN.
\end{equation*}
Since then, lots of linear preservers have been characterized, see \cite{FHLS,LP} and their references. In particular, Marcus and Moyls \cite{MM} determined  linear maps   that send  rank one matrices to rank one matrices, which have the form $A\mapsto MAN$ or $A\mapsto MA^TN$ for some nonsingular matrices $M$ and $N$.

Recently, linear maps that preserve certain properties of tensor products are studied.
The {\it tensor product } ({\it Kronecker product}) of  two matrices $A\in M_m$ and $B\in M_n$ is defined to be   $A\otimes B= \left[ a_{ij}B \right] $, which is in $M_{mn}$.
In \cite{FHLS}, the authors determined  linear maps on Hermitian matrices that leave the spectral radius of all tensor products invariant.  In \cite{FHLPS,FHLS2,FHLS3,LPS} the authors determine  linear maps on $M_{mn}$ that preserve Ky Fan norms, Shattern norms, numerical radius, $k$-numerical range, product numerical range of all matrices
of the form $A \otimes B$ with $A\in M_m$ and $B\in M_n$.
Notice that the set of matrices of tensor product form shares only a very small portion in $M_{mn}$
and the sum of two tensor products is in general no longer a tensor product form.
Therefore, such linear preserver problems are more challenging than the traditional problems.
In some of the above mentioned papers, the authors have also extended their results to multipartite system,
i.e., matrices of the form $A_1\otimes \cdots \otimes A_k$ with $k \ge 2$.

In the literature, rank preserver problem is known to be one of the fundamental problems in this subject
as many other preserver problems can be deduced to rank preserver problems.
For example, the result of Marcus and Moyls \cite{MM} on linear rank one preservers
have been applied in  many other preserver results. More discussion can be found in \cite{GLS}.
Let $n_1,\ldots, n_k$ be positive integers of at least two.
In \cite{ZXF}, Zheng, Xu and Fo\v sner showed that a linear map   $\phi: M_{n_1\cdots n_k}\rightarrow M_{n_1\cdots n_k}$ satisfies
\begin{equation}\label{eq121}
 \rank \phi(A_1\otimes \cdots \otimes A_k)=\rank (A_1\otimes \cdots \otimes A_k)
\quad \hbox{for all } A_i\in M_{n_i},\ i=1,\ldots,k
 \end{equation}
if and only if $\phi$ has the form
 \begin{equation}\label{eqa11}
 \phi(A_1\otimes \cdots \otimes A_k)=M(\psi_1(A_1)\otimes \cdots\otimes \psi_k(A_k))N
 \end{equation}
 where $M,N\in M_{n_1\cdots n_k}$ are nonsingular and $\psi_i$, $i = 1,\dots,k$, is either the identity map or the transpose map.
 Their proof was done by induction on $k$ with some smart argument on the rank of sum of certain matrices.
 The same authors also considered in \cite{XZF} the injective maps on the space of Hermitian matrices satisfying (\ref{eq121}) for rank one matrices only.
By using a structure theorem of Westwick \cite{RW},
Lim \cite{LIM} improved the result of Zheng et al. and showed that a linear map   $\phi: M_{n_1\cdots n_k}\rightarrow M_{n_1\cdots n_k}$
satisfies (\ref{eq121}) for rank one matrices and nonsingular matrices has the form (\ref{eqa11}) too.

In this paper, we characterize  linear maps  $\phi: M_{n_1\cdots n_k}\rightarrow M_{n_1\cdots n_k}$ satisfying (\ref{eq121}) for only rank one matrices $A_1\otimes \cdots \otimes A_k$
with $A_i \in M_{n_i}$. In this case, the structure of maps is more complicated and the maps of the form (\ref{eqa11}) is only one of the special cases.
To state our main result, we need the following notations.
Denote by
$$\cS(\IC^m\otimes \IC^n)=\{x\otimes y: x\in \IC^m,\ y\in \IC^n\}.$$
Also $\cS\left(\IC^{n_1} \otimes \IC^{n_2} \otimes \cdots \otimes \IC^{n_k}\right)$ can be defined accordingly.
For a matrix $A=\left[ a_{ij} \right]\in M_n$,
denote by
$$\vect(A)= \left[
a_{11} \ \ a_{12} \ \cdots \ a_{1n} \ \
a_{21} \ \ a_{22} \ \cdots \ a_{2n} \ \cdots \
a_{n1} \ \ a_{n2} \ \cdots \ a_{nn}
\right]^T \in \IC^{n^2}.$$
In particular, if $A = xy^T$ is rank one matrix with $x,y\in \IC^n$,
then $\vect(xy^T) = x \otimes y$.
Given a set $S$, a partition $\{P_1,\dots,P_r\}$ of $S$ is a collection of subsets of $S$ such that
$P_i \cap P_j = \emptyset$ for $i\ne j$ and $P_1\cup \cdots \cup P_r = S$.
Here the set $P_j$ can be empty.

\medskip
We are now ready to present the main result of this paper.
\begin{theorem}\label{th0}
Let $n_1,\ldots,n_k $ be  integers larger than or equal to 2 and $m=\prod_{i=1}^kn_i$. Suppose $\phi: M_{m}\rightarrow M_{m}$ is a linear map. Then
\begin{equation}\label{rank0}
\rank(\phi(A_1\otimes \cdots \otimes A_k))=1\quad\hbox{whenever}\quad\rank (A_1\otimes \cdots \otimes A_k)=1 \quad \hbox{for all}\quad A_i \in M_{n_i},\, i =1,\dots,k
\end{equation}
if and only if
there is a partition $\{P_1,P_2,P_3,P_4\}$ of the set $K = \{1,\dots,k\}$,
an $m \times p_1p_2p_3^2$ matrix $M$ and an $m \times p_1p_2p_4^2$ matrix $N$ with $p_\ell = \prod_{i\in P_\ell} n_i$ and $p_\ell = 1$ if $P_\ell = \emptyset$, for $\ell = 1,2,3,4$, satisfying
$$\Ker(M)\, \cap \, \cS\left( \bigotimes_{i\in P_1 \cup P_2} \IC^{n_i} \otimes \bigotimes_{j\in P_3} (\IC^{n_j} \otimes \IC^{n_j}) \right) = \{0\}
\hbox{ and }
\Ker(N)\, \cap\, \cS\left( \bigotimes_{i\in P_1 \cup P_2} \IC^{n_i} \otimes \bigotimes_{j\in P_4} (\IC^{n_j} \otimes \IC^{n_j}) \right) = \{0\}$$
such that
\begin{equation}\label{eqb11}
\phi(A_1\otimes \cdots \otimes A_k) = M \left( \bigotimes_{i\in P_1} A_i
\otimes \bigotimes_{i\in P_2} A_i^T \otimes \bigotimes_{i\in P_3} \vect(A_i) \otimes \bigotimes_{i\in P_4} \vect^T(A_i) \right) N^T.
\end{equation}
Furthermore, for any given partition $\{P_1,P_2,P_3,P_4\}$ of $K$, there always exists some $M$ and $N$ that satisfy the above kernel condition, except the case $k =2$, $K = \{1,2\}$, $2\in  \{n_1,n_2\}$, and
$(P_1,P_2,P_3,P_4)  = (\emptyset, \emptyset, K, \emptyset)$
or  $(\emptyset, \emptyset, \emptyset, K)$.
Here, the notations $\bigotimes_{i\in P} \IC^{n_i}$,
$\bigotimes_{j\in P} (\IC^{n_j} \otimes \IC^{n_j})$,
and $\bigotimes_{i\in P} A_i^\dag$
vanish if $P = \emptyset$,
where $A_i^\dag = A_i, A_i^T, \vect(A_i)$, or $\vect^T(A_i)$.
\end{theorem}
Shortly after the authors obtained the above result, they learned via a private communication that, by using another structure result of Westwick \cite{RW2,RW3}, Lim \cite{Lim4} has also obtained a characterization of linear maps between rectangular matrices over an arbitrary field that is rank one non-increasing on tensor products of matrices. In the same project, Lim also considered linear maps sending tensor products of (non)-symmetric rank one
matrices to  (non)-symmetric rank one matrices.

\medskip
The rest of the paper is organized as follows.
In Section 2, the bipartite case ($k=2$) of the main result will be discussed and examples will be given to demonstrate the importance of the kernel condition for the matrices $M$ and $N$
stated in Theorem \ref{th0}.
The proof of the main result and related corollaries will be presented in Section 3.

\section{Bipartite case}
In this section, we will focus on the bipartite case (when $k = 2$).
Let $\{E_{11},\dots,E_{mm}\}$ be the standard basis of $M_m$. A matrix $X \in M_{mn}$  can be expressed as
$$X = \begin{bmatrix}
X_{11} & \cdots & X_{1m} \cr
\vdots & \ddots & \vdots \cr
X_{m1} & \cdots & X_{mm}
\end{bmatrix}
= \sum_{1\le i,j \le m} E_{ij} \otimes X_{ij}
\quad\hbox{with}\quad
X_{ij} \in M_n.$$
The partial transposes of $X$ on the first and the second system are defined by
$$X^{PT_1} = \sum_{1\le i,j \le m} E_{ji} \otimes X_{ij} \quad
\hbox{and}\quad
X^{PT_2} = \sum_{1\le i,j \le m} E_{ij} \otimes X_{ij}^T.$$
Also denote by
$$X^{R_1} =   \sum_{1\le i,j \le m} \vect(E_{ij}) \otimes X_{ij}
\quad \hbox{and}\quad
X^{R_2} =   \sum_{1\le i,j \le m} E_{ij} \otimes \vect(X_{ij} ).$$
Furthermore, define the $m^2 \times n^2$ realigned matrix of $X$ by
$$X^R = \sum_{1\le i,j \le m} \vect(E_{ij}) \otimes \vect^T(X_{ij}).$$
In particular,
$X^{PT_1} = X_1^T \otimes X_2$,
$X^{PT_2} = X_1 \otimes X_2^T$,
$X^{R_1} = \vect(X_1) \otimes X_2$,
$X^{R_2} = X_1 \otimes \vect(X_2)$,
and
$X^R = \vect(X_1) \otimes \vect^T(X_2)$
if $X = X_1 \otimes X_2$.

Finally, for any two linear maps $\psi_1$ and $\psi_2$ on matrix spaces, we say that these two maps are permutationally equivalent if there are permutation matrices $P$ and $Q$ such that $\psi_2(A) = P \psi_1(A) Q$ for all $A$. For example, it is clear that $A\mapsto \vect(A)$ and $A\mapsto \vect(A^T)$ are permutationally equivalent.

\begin{proposition}\label{prop}
Let $n_1,n_2$ be positive integers 
and $m = n_1n_2$.
Given $\psi_P:M_m \to M_m$ defined by $\psi_P(A) =  A^{PT_j}$ with $j \in \{1,2\}$.
The composite map $\psi_R\circ \psi_P$ is permutationally equivalent to the map $\psi_R$ , when $\psi_R$ is one of the following maps.
$$\hbox{(i) } A\mapsto A^{R_j},\quad
\hbox{(ii) }  A\mapsto A^{R}, \quad\hbox{or}\quad
\hbox{(iii) } A\mapsto \vect(A).$$
\end{proposition}

\begin{proof}
For $j = 1,2$, it is obvious that there is a permutation matrix $P_j \in M_{n_j}$ such that
$\vect(X_j^T) = P_j\, \vect(X_j)$ for all $X_j \in M_{n_j}$.
Also there is a permutation matrix $P_{12} \in M_m$ such that
$\vect(X_1 \otimes X_2) = P_{12} \left( \vect(X_1) \otimes \vect(X_2) \right)$
for all $X_i \in M_{n_i}$, $ i = 1,2$.
We now consider the case when $j = 1$. The case $j=2$ can be proved in a similar way.

First suppose $\psi_R:A\mapsto A^{R_1}$.
For any $X_i \in M_{n_i}$, $i=1,2$,
\begin{multline*}
\psi_R\circ\psi_P(X_1\otimes X_2)
= \left( (X_1 \otimes X_2)^{PT_1} \right)^{R_1}
= (X_1^T \otimes X_2)^{R_1}
= \vect(X_1^T) \otimes X_2  \\
= (P_1 \otimes I_{n_2}) (\vect(X_1) \otimes X_2) = (P_1 \otimes I_{n_2})(X_1 \otimes X_2)^{R_1} =  (P_1 \otimes I_{n_2}) \psi_R(X_1\otimes X_2).
\end{multline*}
By linearity of the two maps, we conclude that
$\psi_R\circ\psi_P(A) = (P_1 \otimes I_{n_2}) \psi_R(A)$ for all $A \in M_m$.

Suppose now $\psi_R:A\mapsto A^{R}$.
For any $X_i \in M_{n_i}$, $i=1,2$,
\begin{multline*}
\psi_R\circ\psi_P(X_1\otimes X_2)
= \left( (X_1 \otimes X_2)^{PT_1} \right)^{R}
= (X_1^T \otimes X_2)^{R}
= \vect(X_1^T) \otimes \vect^T(X_2)  \\
= (P_1 \otimes I_{n_2})  (\vect(X_1) \otimes \vect^T(X_2)) = (P_1 \otimes I_{n_2})  (X_1 \otimes X_2)^{R} = (P_1 \otimes I_{n_2}) \psi_R(X_1\otimes X_2).
\end{multline*}
Thus, the same conclusion holds.
Finally assume $\psi_R:A\mapsto \vect(A)$.
For any $X_i \in M_{n_i}$, $i=1,2$,
\begin{multline*}
\psi_R\circ\psi_P(X_1\otimes X_2)
= \vect(X_1^T \otimes X_2)
= P_{12} \left( \vect(X_1^T) \otimes \vect(X_2)\right)
= P_{12} (P_1 \otimes I_{n_2} ) \left( \vect(X_1) \otimes \vect(X_2)\right)  \\
 = P_{12}(P_1 \otimes I_{n_2})P_{12}^T \vect (X_1 \otimes X_2) =  P_{12}(P_1 \otimes I_{n_2})P_{12}^T \psi_R(X_1\otimes X_2).
\end{multline*}
Again by linearity of the maps, we conclude that
$\psi_R\circ\psi_P(A) = P_{12}(P_1\otimes I_{n_2}) P_{12}^T \psi_R(A)$ for all $A \in M_m$.
\end{proof}

It turns out that for the bipartite case ($k=2$), Theorem \ref{th0} can be expressed in terms of partial transpose and realigned matrix as follows.
\begin{theorem}\label{th2}
Let $n_1,n_2$ be integers larger than or equal to two and $m = n_1n_2$.
Suppose $\phi: M_{m} \to M_{m}$ is a linear map. Then
\begin{equation}\label{rank2}
\rank(\phi(A_1\otimes A_2))=1\quad\hbox{whenever}\quad \rank (A_1 \otimes A_2)=1 \quad \hbox{for all } A_i \in M_{n_i},\, i = 1,2,
\end{equation}
if and only if $\phi = \psi_T\circ \psi_M \circ \psi_R \circ \psi_P$,
where
\begin{enumerate}
\item[\rm (i)] $\psi_P: A \mapsto A$, $A\mapsto A^{PT_1}$ or $A\mapsto A^{PT_2}$;
\item[\rm (ii)] $\psi_R: A\mapsto A$, $A\mapsto A^{R_1}$, $A\mapsto A^{R_2}$, $A\mapsto A^{R}$ or $A\mapsto \vect(A)$ ;
\item[\rm (iii)] $\psi_M: A \mapsto MAN^T$;
\item[\rm (iv)] $\psi_T: A\mapsto A$ or $A\mapsto A^T$,
\end{enumerate}
which has totally $16$ different forms,
and $M$ and $N$ are matrices of appropriate sizes satisfying
\begin{enumerate}
\item[\rm (1)] $\Ker(M) \cap \cS\left( \IC^{n_1} \otimes \IC^{n_2} \right) = \{0\}$ and
$\Ker(N) \cap \cS\left( \IC^{n_1} \otimes \IC^{n_2} \right) = \{0\}$ if $\psi_R$ is the map $A\mapsto A$;
\item[\rm (2)] $\Ker(M) \cap \cS\left( \IC^{n_1} \otimes \IC^{n_1} \right) = \{0\}$ and
$\Ker(N) \cap \cS\left( \IC^{n_2} \otimes \IC^{n_2} \right) = \{0\}$ if $\psi_R$ is the map $A\mapsto A^R$;
\item[\rm (3)] $\Ker(M) \cap \cS\left( \IC^{n_1} \otimes \IC^{n_1} \otimes \IC^{n_2} \right) = \{0\}$ and
$N$ has full column rank equal to $n_2$ if $\psi_R$ is the map $A\mapsto A^{R_1}$;
\item[\rm (4)] $\Ker(M) \cap \cS\left( \IC^{n_1} \otimes \IC^{n_2} \otimes \IC^{n_2} \right) = \{0\}$ and
$N$ has full column rank equal to $n_1$ if $\psi_R$ is the map $A\mapsto A^{R_2}$;
\item[\rm (5)] $\Ker(M) \cap \cS\left( \IC^{n_1} \otimes \IC^{n_2} \otimes \IC^{n_1} \otimes \IC^{n_2} \right) = \{0\}$ and $N$ is an $m \times 1$ nonzero matrix if $2 \notin \{n_1,n_2\}$ and $\psi_R$ is the map $A\mapsto \vect(A)$.
\end{enumerate}
\end{theorem}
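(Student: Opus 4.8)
The plan is to obtain Theorem~\ref{th2} as a reformulation of the $k=2$ case of Theorem~\ref{th0}. Since $K=\{1,2\}$, a partition $\{P_1,P_2,P_3,P_4\}$ of $K$ is completely determined by which block contains $1$ and which contains $2$, so only $4^2=16$ ordered choices $(P_1,P_2,P_3,P_4)$ need to be examined. For the ``only if'' part I would apply Theorem~\ref{th0}, fix the resulting partition, and rewrite the middle factor of~\eqref{eqb11} evaluated at $A=A_1\otimes A_2$, namely $\bigotimes_{i\in P_1}A_i\otimes\bigotimes_{i\in P_2}A_i^T\otimes\bigotimes_{i\in P_3}\vect(A_i)\otimes\bigotimes_{i\in P_4}\vect^T(A_i)$, in the form $\psi_R\circ\psi_P(A)$ with $\psi_P$ one of the three maps in (i) and $\psi_R$ one of the five maps in (ii), after possibly transposing the whole expression (which then accounts for $\psi_T$ in (iii)) and after left and right multiplication by fixed permutation matrices, which get absorbed into $M$ and $N$.

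For the rewriting I would use the identities recorded before the statement of Theorem~\ref{th2}: for $A=A_1\otimes A_2$ one has $A^{PT_1}=A_1^T\otimes A_2$, $A^{PT_2}=A_1\otimes A_2^T$, $A^{R_1}=\vect(A_1)\otimes A_2$, $A^{R_2}=A_1\otimes\vect(A_2)$ and $A^{R}=\vect(A_1)\otimes\vect^T(A_2)$, together with $\vect(A_1\otimes A_2)=Q\,(\vect(A_1)\otimes\vect(A_2))$ for a fixed permutation matrix $Q$; one also uses that transposing the middle factor interchanges the ``$A_i$'' slots with the ``$A_i^T$'' slots and the ``$\vect$'' slots with the ``$\vect^T$'' slots, while reversing the order of the two tensor legs is conjugation by a commutation matrix. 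Running over the $16$ cases, the four partitions with $P_3=P_4=\emptyset$ (so $P_1\sqcup P_2=K$) give $\psi_R=\mathrm{id}$; the eight partitions in which exactly one index lies in $P_3\cup P_4$ give $\psi_R$ equal to $A\mapsto A^{R_1}$ or $A\mapsto A^{R_2}$ according as the ``realigned'' index is $1$ or $2$; the two partitions with one index in $P_3$ and the other in $P_4$ give $\psi_R$ equal to $A\mapsto A^R$; and the two partitions with both indices in $P_3$, respectively both in $P_4$, give $\psi_R$ equal to $A\mapsto\vect(A)$. In each case $\psi_P$ encodes whether the ``matrix'' leg is transposed and $\psi_T$ whether the whole product is transposed; the choice of $\psi_P$ becomes immaterial once $\psi_R$ is $A\mapsto A^R$ or $A\mapsto\vect(A)$, which is harmless for a ``has the form'' statement.

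It then remains to transport the kernel conditions of~\eqref{eqb11}, namely $\Ker(M)\cap\bigl(\bigotimes_{i\in P_1\cup P_2}\IC^{n_i}\otimes\bigotimes_{j\in P_3}(\IC^{n_j}\otimes\IC^{n_j})\bigr)=\{0\}$ and the analogue for $N$ with $P_3$ replaced by $P_4$, through these permutations and transposes. Here one uses that the cone of decomposable tensors is invariant under the absorbed permutation matrices, that transposing the whole product interchanges the roles of $M$ and $N$, and that $\Ker(M)\cap\IC^{n_i}=\{0\}$ just says $M$ has full column rank $n_i$. One then checks that the $\psi_R=\mathrm{id}$ family yields exactly condition (1), $\psi_R$ equal to $A\mapsto A^R$ yields (2), $A\mapsto A^{R_1}$ yields (3), $A\mapsto A^{R_2}$ yields (4), and $A\mapsto\vect(A)$ yields (5) together with the requirement $2\notin\{n_1,n_2\}$; the sizes of $M$ and $N$ in each case are forced by the $p_\ell$'s of Theorem~\ref{th0}. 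The requirement $2\notin\{n_1,n_2\}$ in (5) --- and the fact that, when $2\notin\{n_1,n_2\}$, an admissible $M$ does exist so that this case really occurs --- is exactly the ``furthermore'' clause of Theorem~\ref{th0} for the two partitions $(\emptyset,\emptyset,K,\emptyset)$ and $(\emptyset,\emptyset,\emptyset,K)$. The ``if'' direction is obtained by running the dictionary backwards: every map of the stated form is, after absorbing $\psi_T$ and the permutation matrices back into $M$ and $N$, an instance of~\eqref{eqb11} for a partition of $K$ with admissible $M$ and $N$, so the ``if'' part of Theorem~\ref{th0} applies. The main obstacle is purely organizational: keeping track, over the $16$ cases, of which commutation matrices and transposes get absorbed where, and verifying that the kernel conditions are carried along faithfully --- there is no new conceptual ingredient beyond Theorem~\ref{th0}.
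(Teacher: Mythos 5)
Your proposal is correct and follows exactly the route the paper intends: the paper gives no separate proof of Theorem~\ref{th2}, presenting it as the $k=2$ specialization of Theorem~\ref{th0} obtained by running through the $16$ ordered partitions of $\{1,2\}$ and rewriting the middle factor of (\ref{eqb11}) via the identities $X^{PT_j}$, $X^{R_j}$, $X^R$, $\vect(X)$ for $X=A_1\otimes A_2$ (cf.\ the remark counting the $16$ forms of $\psi_T\circ\psi_R\circ\psi_P$), with the $2\notin\{n_1,n_2\}$ caveat in case (5) coming from the ``furthermore'' clause. Your case bookkeeping and the transport of the kernel conditions are accurate.
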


\begin{proof}
It is easy to verify that the two maps
$$X_1 \otimes X_2 \mapsto X_1 \otimes X_2
\quad\hbox{and}\quad
X_1 \otimes X_2 \mapsto X_2 \otimes X_1$$
are premuationally similar.
Applying Theorem \ref{th0} with $k = 2$ and taking the above observation into account,
the equation (\ref{eqb11}) can be reduced to the following 16 cases.
\begin{enumerate}[1)]
\item $(P_1,P_2,P_3,P_4) = ( \{1\}, \{2\}, \emptyset, \emptyset)$
and $\phi(A_1 \otimes A_2) = M(A_1 \otimes A_2^T) N^T = M (A_1\otimes A_2)^{PT_2}N^T$.
\item $(P_1,P_2,P_3,P_4) = ( \{2\}, \{1\}, \emptyset, \emptyset)$
and $\phi(A_1 \otimes A_2) = M(A_1^T \otimes A_2) N^T = M (A_1\otimes A_2)^{PT_1}N^T$.
\item $(P_1,P_2,P_3,P_4) = ( \{1\}, \emptyset, \{2\}, \emptyset)$
and $\phi(A_1 \otimes A_2) = M(A_1 \otimes \vect(A_2)) N^T = M (A_1\otimes A_2)^{R_2}N^T$.
\item $(P_1,P_2,P_3,P_4) = ( \{2\}, \emptyset, \{1\}, \emptyset)$
and $\phi(A_1 \otimes A_2) = M( \vect(A_1) \otimes A_2) N^T = M (A_1\otimes A_2)^{R_1}N^T$.
\item $(P_1,P_2,P_3,P_4) = ( \{1\}, \emptyset, \emptyset, \{2\})$
and $\phi(A_1 \otimes A_2) = M(A_1 \otimes \vect^T (A_2)) N^T = \left( N ( (A_1\otimes A_2)^{PT_1})^{R_2}M^T \right)^T$.
\item $(P_1,P_2,P_3,P_4) = (  \{2\}, \emptyset, \emptyset, \{1\})$
and $\phi(A_1 \otimes A_2) = M(\vect^T (A_1) \otimes A_2) N^T = \left( N ( (A_1\otimes A_2)^{PT_2})^{R_1}M^T \right)^T$.
\item $(P_1,P_2,P_3,P_4) = ( \emptyset, \{1\}, \{2\},  \emptyset)$
and $\phi(A_1 \otimes A_2) = M(A_1^T \otimes \vect (A_2)) N^T = M ((A_1\otimes A_2)^{PT_1})^{R_2}N^T$.
\item $(P_1,P_2,P_3,P_4) = (  \emptyset, \{2\}, \{1\},  \emptyset)$
and $\phi(A_1 \otimes A_2) = M( \vect(A_1) \otimes A_2^T )) N^T = M ((A_1\otimes A_2)^{PT_2})^{R_1}N^T$.
\item $(P_1,P_2,P_3,P_4) = (  \emptyset, \{1\}, \emptyset, \{2\})$
and $\phi(A_1 \otimes A_2) = M(A_1^T \otimes \vect^T (A_2)) N^T = \left(N (A_1\otimes A_2)^{R_2}M^T\right)^T$.
\item $(P_1,P_2,P_3,P_4) = (  \emptyset, \{2\}, \emptyset, \{1\})$
and $\phi(A_1 \otimes A_2) = M( \vect^T (A_1) \otimes A_2^T )) N^T = \left(N (A_1\otimes A_2)^{R_1}M^T\right)^T$.
\item $(P_1,P_2,P_3,P_4) = (  \emptyset, \emptyset, \{1\}, \{2\})$
and $\phi(A_1 \otimes A_2) = M(\vect(A_1) \otimes \vect^T (A_2)) N^T = M (A_1\otimes A_2)^{R}N^T$.
\item $(P_1,P_2,P_3,P_4) = (  \emptyset, \emptyset, \{2\}, \{1\})$
and $\phi(A_1 \otimes A_2) = M(\vect^T(A_1) \otimes \vect (A_2)) N^T = \left( N (A_1\otimes A_2)^{R}M^T\right)^T$.
\item $(P_1,P_2,P_3,P_4) = (  \{1,2\}, \emptyset, \emptyset, \emptyset \}$
and $\phi(A_1 \otimes A_2) = M(A_1 \otimes A_2) N^T$.
\item $(P_1,P_2,P_3,P_4) = ( \emptyset, \{1,2\}, \emptyset, \emptyset)$
and $\phi(A_1 \otimes A_2) = M(A_1^T \otimes A_2^T) N^T = \left( N (A_1\otimes A_2)M^T \right)^T$.
\item $(P_1,P_2,P_3,P_4) = ( \emptyset,\emptyset,  \{1,2\}, \emptyset)$ and 
$\phi(A_1 \otimes A_2) = M\left( \vect (A_1)  \otimes \vect( A_2)  \right) N^T$ \\[1mm] \hspace*{74mm} $=  M P_{12}^T \left( \vect(A_1 \otimes A_2) \right) N^T$.
\item $(P_1,P_2,P_3,P_4) = (  \emptyset,\emptyset,\emptyset,  \{1,2\}) $ 
and $\phi(A_1 \otimes A_2) = M\left( \vect^T (A_1)  \otimes \vect^T( A_2)  \right) N^T  $ \\[1mm] \hspace*{74mm} $= \left(N P_{12}^T (\vect \left(A_1 \otimes A_2) \right) M^T \right)^T$.
\end{enumerate}
Here, $M$ and $N$ are matrices with appropriate size,
and satisfy the kernel condition in Theorem \ref{th0}
(In some cases, the roles of $M$ and $N$ may interchange).
Also the cases 15) and 16) hold only when $2\notin \{n_1,n_2\}$.
In all these cases, the map $\phi$ can be represented by
$A\mapsto \psi_T \circ \psi_M \circ \psi_R \circ \psi_P(A)$
where $\psi_P$, $\psi_M$, $\psi_R$, $\psi_T$ are of the forms in (i), (ii), (iii) and (iv) respectively.
In particular for case 15), we have $\Ker(M) \cap \cS\left( \IC^{n_1} \otimes \IC^{n_1} \otimes \IC^{n_2} \otimes \IC^{n_2} \right) = \emptyset$
by Theorem \ref{th0} and hence
$\Ker(MP_{12}^T) \cap \cS\left( \IC^{n_1} \otimes \IC^{n_2} \otimes \IC^{n_1} \otimes \IC^{n_2} \right) = \emptyset$.
Similar for the case 16).
Furthermore, by Proposition \ref{prop}, if $\psi_P$ is a partial transport map with respect to the $j$th subsystem, $\psi_R\circ \psi_P$ is permutationally equivalent to $\psi_R$, when $\psi_R$ has the form
$A\mapsto A^{R_j}$, $A\mapsto A^R$ or $A\mapsto \vect(A)$.
Therefore, instead of $15$ different types, there are actually only $9$ different types of compositions of $\psi_R \circ \psi_P$.
Finally, since $(A^{PT_1})^T = A^{PT_2}$ and $(A^{PT_2})^T = A^{PT_1}$,
the maps $A\mapsto (MA^{PT_1}N^T)^T$
and $A\mapsto (MA^{PT_2}N^T)^T$
are the same as
$A\mapsto N^T A^{PT_2} M$ and
$A\mapsto N^T A^{PT_1} M$, respectively.
Therefore, the map $\psi_T\circ \psi_M \circ \psi_R \circ \psi_P$ has totally $16$ different forms only.
\end{proof}

In the following, we give some low dimensional examples of $M$ and $N$ that satisfy the conditions (2), (3) and (5) of Theorem \ref{th2}. 

\begin{example} \rm
Assume $(n_1,n_2) = (2,3)$ and define the $6 \times 4$ matrix $M$ and the $6 \times 9$ matrix $N$ by
$$M = \begin{bmatrix}
1 & 0 & 0 & 0 \cr
0 & 1 & 0 & 0 \cr
0 & 0 & 1 & 0 \cr
0 & 0 & 0 & 1 \cr
0 & 0 & 0 & 0  \cr
0 & 0 & 0 & 0
\end{bmatrix}
\quad\hbox{and}\quad
N = \begin{bmatrix}
1 & 0 & 0 & 0 & -1 & 0 & 0 & 0 & 0 \cr
0 & 1 & 0 & 0 & 0 & -1 & 0 & 0 & 0 \cr
0 & 0 & 1 & 0 & 0 & 0 & -1 & 0 & 0 \cr
0 & 0 & 0 & 1 & 0 & 0 & 0 & -1 & 0 \cr
0 & 0 & 0 & 0 & 0 & 0 & 0 &  0 & -1 \cr
0 & 0 & 0 & 0 & 0 & 0 & 0 & 0 & 0
\end{bmatrix}.$$
Clearly, $\rank(M) = 4$ and $\rank(N) = 5$. Also
$$\Ker(M) = \{0\} \quad\hbox{and}\quad \Ker(N) = \left\{
\begin{bmatrix} a & b & c & d & a & b & c  & d & 0\end{bmatrix}^T: a,b,c,d\in \IC \right\}.$$
Therefore, $\Ker(N)$  does not contain any nonzero element in $\cS(\IC^3 \otimes \IC^3)$. Then the map $A\mapsto MA^RN^T$
satisfies the condition (\ref{rank2}) and its range space contains matrices of rank at most $4$ only.
\end{example}

\begin{example} \rm
Assume $(n_1,n_2) = (2,3)$ and define the $6 \times 12$ matrix $M$ and the $6 \times 3$ matrix $N$ by
$$M = \begin{bmatrix} I_6 & \hat M \end{bmatrix}
= \left[\begin{array}{cccccccccccc}
1 & 0  & 0 & 0 & 0 & 0 & 0  & 1 & 0 & 0  & 0 & 0  \cr
0 & 1  & 0 & 0 & 0 & 0 & 0 & 0 & 1 & 0 & 0 & 0 \cr
0 & 0  & 1 & 0 & 0 & 0 & 0  & 0 & 0 & 1 & 0 & 0 \cr
0 & 0  & 0 & 1 & 0 & 0 & 1 & 0 & 0 & 0 & 1 & 0 \cr
0 & 0  & 0 & 0 & 1 & 0 & 0 & 0 & 0 & 0 & 0 & 1 \cr
0 & 0  & 0 & 0 & 0 & 1 & 0 & 0 & -1 & 0 & 0 & 0
\end{array}\right]
\quad\hbox{and}\quad
N = \begin{bmatrix}
1 & 0 & 0 \cr
0 & 1 & 0 \cr
0 & 0 & 1 \cr
0 & 0 & 0 \cr
0 & 0 & 0 \cr
0 & 0 & 0
\end{bmatrix}.$$
Clearly, $\Ker(N)  = \{0\}$. Suppose $M(x\otimes y \otimes z) = 0$ for some nonzero $x,y\in \IC^2$ and $z \in \IC^3$.
Then
$$0 = M(x\otimes y \otimes z) = M(x \otimes I_6) (y \otimes z) =  (x_1 I_6 + x_2 \hat M) (y \otimes z),$$
where $x = \left[ x_1 \ x_2 \right]^T$. So $(x_1 I_6 + x_2 \hat M)$ is singular and hence $x_1 = 0$ as $\det(x_1 I_6 + x_2 \hat M) = x_1^6$.
Thus, the vector $y \otimes z$ is in the kernel of $\hat M$. However, $\Ker(\hat M) = \left\{\left[a \ 0 \ 0 \ 0 \ a \ 0\right]^T: a\in \IC \right\}$,
which does not contain any nonzero element of $\cS(\IC^2 \otimes \IC^3)$.
Therefore, even $\Ker(M)$ is a $6$ dimensional subspace of $\IC^{12}$,
$\Ker(M)$ does not contain any nonzero element of $\cS(\IC^2 \otimes \IC^2 \otimes \IC^3)$.
\end{example}

\begin{example} \rm
Assume $(n_1,n_2) = (3,3)$ and define the $9 \times 81$ matrix $M$ by
$$M = \begin{bmatrix}
I_9 & R & R^2 & R^3 & -I_9 & -R & -R^2 & -R^3 & R^4
\end{bmatrix}
\quad\hbox{with}\quad
R = \begin{bmatrix}
0 & 1 & 0 & 0 & 0 & 0 & 0 & 0 & 0 \cr
0 & 0 & 1 & 0 & 0 & 0 & 0 & 0 & 0 \cr
0 & 0 & 0 & 1 & 0 & 0 & 0 & 0 & 0 \cr
-1 & 0 & 0 & 0 & 1 & 0 & 0 & 0 & 0 \cr
0 & 0 & 0 & 0 & 0 & 1 & 0 & 0 & 0 \cr
0 & 0 & 0 & 0 & 0 & 0 & 1 & 0 & 0 \cr
0 & 0 & 0 & 0 & 0 & 0 & 0 & 1 & 0 \cr
-1 & 0 & 0 & 0 & 1 & 0 & 0 & 0 & 1 \cr
0 & 0 & 0 & 0 & 0 & 0 & 0 & 0 & 0
\end{bmatrix}.
$$
Notice that
$$\Ker(R) \subseteq \Ker(R^2) \subseteq \Ker(R^3) \subseteq \Ker(R^4)
= \left\{
\begin{bmatrix} a & b & c & d & a & b & c  & d & 0\end{bmatrix}^T: a,b,c,d\in \IC \right\}.$$
Suppose $M(x \otimes y \otimes z \otimes w) = 0$ for some nonzero
$x,y,z,w \in \IC^3$.
Set $x \otimes y = \left[u_1 \ \cdots \ u_9\right]^T \in \IC^9$
and define
$$U = M (x \otimes y \otimes I_9) = (u_1-u_5) I_9 + (u_2-u_6) R
+(u_3-u_7) R^2 + (u_4-u_8) R^3 + u_9 R^4.$$
Then
$$0 = M(x \otimes y \otimes z \otimes w) = M (x\otimes y \otimes I_9) (z \otimes w)
= U (z \otimes w).$$
Now let
$$
U_5 = u_9 I_9\quad\hbox{and}\quad
U_k = (u_k - u_{k+4}) I_9 + U_{k+1} R \quad\hbox{for }k = 1,2,3,4.$$
Then it can be verified that
$$U_1 = (u_1-u_5) I_9 + \left( (u_2-u_6) I_9 + \left(  (u_3-u_7) I_9 + \left(  (u_4-u_8) I_9 + \left( u_9 I_9 \right) R \right) R \right) R \right) R = U.$$
For $k = 1,2,3,4$,
because $R$ is singular,
$U_k$ is singular if and only if $u_k - u_{k+4} = 0$, or equivalently, $U_k= U_{k+1} R$.
Furthermore, when $U_k$ is singular,
$$\Ker(U_k R^{k-1}) = \Ker(U_{k+1} R R^{k-1}) = \Ker(U_{k+1} R^{k}).$$
Suppose at least one of $U_1,\dots,U_5$ is nonsingular, say $U_\ell$ is nonsingular for some $1\le \ell \le 5$
and $U_1, \dots, U_{\ell-1}$ are all singular.
Then
$$\Ker(U) = \Ker(U_1) \subseteq \Ker(U_2 R) \subseteq \Ker(U_3 R^2) \subseteq \cdots \subseteq \Ker(U_\ell R^{\ell-1}) = \Ker (R^{\ell-1}) \subseteq \Ker(R^4).$$
But this is impossible since $U(z\otimes w) = 0$ while $\Ker(R^4)$ does not contain any nonzero element of $\cS(\IC^3\otimes \IC^3)$. Therefore, all $U_1,\dots,U_5$ are singular.
In this case, we have $u_k - u_{k+4} = 0$ for $k=1,2,3,4$ and $u_9 = 0$, or equivalently,
$x\otimes y$ has the form
$\begin{bmatrix} u_1 & u_2 & u_3 & u_4 & u_ 1& u_2 & u_3  & u_4 & 0\end{bmatrix}^T$,
and contradiction again arrived.
Thus, one can conclude that $\Ker(M)$ does not contain any nonzero element of $\cS(\IC^3 \otimes \IC^3 \otimes \IC^3 \otimes \IC^3)$.
Now take any $9 \times 1$ nonzero matrix $N$.
Then the composition map $\phi: A \mapsto M \vect(A) N^T$ satisfies condition (\ref{rank2}).
In this case,  $\rank(\phi(A)) \le 1$ for all $A\in M_9$.

\end{example}

\begin{remark} \rm
For condition (1) of Theorem \ref{th2}, both $M$ and $N$ have size $m \times m$.
In this case, any nonsingular matrices $M, N\in M_{m}$ satisfy case (1). But there exist singular matrices that satisfy the condition (1) too.
For example, when $(n_1,n_2) = (2,2)$ one can construct a rank three $4 \times 4$ matrix $M$
with $\Ker(M) = \left\{\left[a \ 0 \ 0 \ a \right]^T: a\in \IC \right\}$,
which does not contain any nonzero vector in $\cS(\IC^2 \otimes \IC^2)$.

For condition (2) of Theorem \ref{th2}, the same observation as above follows if $n_1 = n_2$.
If $n_1 < n_2$, $M$ can be chosen to be any $m \times n_1^2$ matrix with full column rank, i.e., $\rank(M) = n_1^2$.
Similarly, $N$ can be chosen to be any $m \times n_2^2$ matrix with full column rank
if $n_1 > n_2$.

\end{remark}

Finally, it has to point out that the partial transpose and realignment are two useful concept in the study of separable problem,
which is one of the most important problems in quantum information science.
Although it have been showed that the general characterization of separable states is NP-hard \cite{Gur},
researchers are interested in finding effective criterion to determine separability of a quantum state.
A quantum state (density matrix) $X$ is PPT (positive partial transpose)
if $X^{PT_1}$ (or equivalently $X^{PT_2}$) is positive semi-definite.
One of the classical and popular criteria is PPT criterion introduced by Peres \cite{Per}.
The PPT criterion states that if $X$ is separable, then $X$ is PPT
and these two conditions are equivalent if $m = n_1n_2 \le 6$ \cite{HHH}.
Another strong criterion is CCNR criterion \cite{CW, Rud}, which confirmed that
$\|X^R\|_1 \le 1$ if $X$ is separable.
It has to note that researchers also studied preservers on separable states, see
\cite{AS, FLPS, John}. In particular, the authors in \cite{FLPS} studied linear maps
that send the set of separable states onto itself in multipartite system.

\section{Proof of the main results}

In this section, we will present the proof of Theorem \ref{th0}.
The proof relies on the structure result of  Westwick \cite[Theorem 3.4]{RW} on preservers of nonzero decomposable tensors,
and we restate this result as follows.

\begin{theorem}\label{West}
Let $U_1,\dots, U_p$ and $W_1,\dots,W_q$ be finite dimensional vector spaces over a field $F$ with $\dim(U_i) \ge 2$ and define $U  = \bigotimes_{i=1}^p U_i$ and $W = \bigotimes_{j = 1}^q W_j$ to be the tensor product spaces of $U_i$ and $W_j$.
Suppose $f: U \to W$ is a linear map sending nonzero
decomposable tensors into nonzero decomposable tensors.
Then there is a partition $\{S_1,\dots,S_q\}$ of $\{1,\dots,p\}$ ($S_j$ can be an empty set)
and linear functions $f_j: \bigotimes_{i\in S_j} U_i\to W_j$ sending nonzero decomposable tensors
to nonzero vectors, such that
$$f(x_1\otimes \cdots \otimes x_p) = \bigotimes_{j=1}^q f_j \left(\otimes_{i\in S_j} x_i\right).$$
Here, $f_j$ is defined to be a nonzero constant function, i.e., $f_j(\cdot) = w_j$
for some nonzero $w_j \in W_j$, if $S_j  = \emptyset$.
\end{theorem}

We will prove the following equivalent version of Theorem \ref{th0}.

\begin{theorem}\label{th1}
Let $n_1,\ldots,n_k $ be integers larger than or equal to 2 and let $m=\prod_{i=1}^kn_i$. Suppose $\phi: M_{m}\rightarrow M_{m}$ is a linear map. Then
\begin{equation}\label{rank}
\rank(\phi(A_1\otimes \cdots \otimes A_k))=1\quad\hbox{whenever}\quad\rank (A_1\otimes \cdots \otimes A_k)=1 \quad \hbox{for all } A_i \in M_{n_i}\, i = 1,\dots,k
\end{equation}
if and only if
there are two subsets $K_1, K_2$ of $K = \{1,\dots,k\}$,
an $m \times m_1m_2$ matrix $M$ and an $m \times m^2/(m_1m_2)$ matrix $N$ with
$m_t = \prod_{i\in K_t} n_i$ or  $m_t = 1$ if $K_t = \emptyset$, $t = 1,2$, satisfying
\begin{equation}\label{eqc1}
\Ker(M) \cap \cS\left( \bigotimes_{i\in K_1} \IC^{n_i} \otimes \bigotimes_{j\in K_2} \IC^{n_j} \right) = \{0\}
\quad\hbox{and}\quad
\Ker(N) \cap \cS\left( \bigotimes_{i\notin K_1} \IC^{n_i} \otimes \bigotimes_{j\notin K_2}
\IC^{n_j}  \right) = \{0\}
\end{equation}
such that
\begin{equation}\label{eq11}
\phi\left( x_1y_1^T\otimes \cdots \otimes x_ky_k^T \right) = M \left( \bigotimes_{i\in K_1} x_i
\otimes \bigotimes_{j\in K_2} y_j \right) \left(\bigotimes_{i\notin K_1} x_i \otimes \bigotimes_{j\notin K_2} y_j \right)^T N^T\quad\hbox{for all}\quad x_i,y_i \in \IC^{n_i}.
\end{equation}
Furthermore, for any given subsets $K_1,K_2$ of $K$, there always exists some $M$ and $N$ that satisfy the above kernel condition, except the case $k =2$, $K = \{1,2\}$, $2\in  \{n_1,n_2\}$, and either $K_1 = K_2 = K$ or $K_1 = K_2 = \emptyset$.
\end{theorem}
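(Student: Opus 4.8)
The plan is to establish the three components of Theorem~\ref{th1} separately: sufficiency of the form (\ref{eq11})--(\ref{eqc1}), necessity via Westwick's structure theorem, and the closing existence/exception assertion, which becomes a projective dimension count. (We assume $k\ge2$; for $k=1$ the statement is the classical Marcus--Moyls theorem.) \emph{Sufficiency} is immediate: if $\phi$ is given by (\ref{eq11}) and $A_i=x_iy_i^T$ with all $x_i,y_i\ne0$, so $A_1\otimes\cdots\otimes A_k$ has rank one, then the right side of (\ref{eq11}) equals $(Mu)(Nv)^T$ with $u=\bigotimes_{i\in K_1}x_i\otimes\bigotimes_{j\in K_2}y_j$ and $v=\bigotimes_{i\notin K_1}x_i\otimes\bigotimes_{j\notin K_2}y_j$, hence has rank at most one; as $u$ is a nonzero element of the Segre cone $\bigotimes_{i\in K_1}\IC^{n_i}\otimes\bigotimes_{j\in K_2}\IC^{n_j}$ and $v$ of the analogous cone, (\ref{eqc1}) gives $Mu\ne0$, $Nv\ne0$, so the rank is exactly one.

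\emph{For necessity,} assume (\ref{rank}). Since $x_1y_1^T\otimes\cdots\otimes x_ky_k^T$ vanishes whenever any $x_i$ or $y_i$ does, (\ref{rank}) is the assertion that $\rank\phi(x_1y_1^T\otimes\cdots\otimes x_ky_k^T)\le1$ for all $x_i,y_i$, with equality precisely when all of them are nonzero. Applying $\vect$, and using $\vect(x_iy_i^T)=x_i\otimes y_i$ together with the fixed permutation matrix identifying $\vect(A_1\otimes\cdots\otimes A_k)$ with $\vect(A_1)\otimes\cdots\otimes\vect(A_k)$, we obtain a linear map
\[ T:\bigotimes_{i=1}^{k}(\IC^{n_i}\otimes\IC^{n_i})\longrightarrow\IC^{m}\otimes\IC^{m} \]
that sends every nonzero decomposable tensor of the $2k$-fold source (each factor of dimension $n_i\ge2$) to a nonzero decomposable of the $2$-fold target. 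Westwick's structure theorem \cite{RW} then produces a partition of the $2k$ source slots $\{x_1,y_1,\dots,x_k,y_k\}$ into two blocks $S,S^c$ such that, after a permutation of the source factors (permutations preserve Segre cones, so the kernel condition is untouched), $T$ is the tensor product of a linear map $M$ from the slots in $S$ into $\IC^m$ and a linear map $N$ from the slots in $S^c$ into $\IC^m$. Setting $K_1=\{i:x_i\in S\}$, $K_2=\{i:y_i\in S\}$ and passing back from $\IC^m\otimes\IC^m$ to $M_m$ via $a\otimes b\leftrightarrow ab^T$ yields (\ref{eq11}); and the strict equality $\rank=1$ for fully nonzero inputs forces $Mu\ne0$ and $Nv\ne0$ for every nonzero $u,v$ in the respective Segre cones, which is (\ref{eqc1}). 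I expect this to be the main obstacle: one must arrange the vectorization so that $\phi$ genuinely becomes a decomposability preserver between tensor spaces of the correct shapes, verify the hypotheses of \cite{RW} (in particular that $T$ annihilates no nonzero decomposable), and carefully track the permutation and the shapes of $M,N$ so that the block partition turns into exactly the pair $(K_1,K_2)$ and formula (\ref{eq11}).

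\emph{The existence claim} rests on the following. For subsets $L_1,L_2\subseteq K$, set $\Sigma=\bigotimes_{i\in L_1}\IC^{n_i}\otimes\bigotimes_{j\in L_2}\IC^{n_j}\subseteq\IC^{l}$, $l=\prod_{i\in L_1}n_i\cdot\prod_{j\in L_2}n_j$, and $d=\dim\mathbb{P}(\Sigma)=\sum_{i\in L_1}(n_i-1)+\sum_{j\in L_2}(n_j-1)$. Then an $m\times l$ matrix $M$ with $\Ker(M)\cap\Sigma=\{0\}$ exists if and only if $d<m$. If $d<m$: the locus of $M$ for which some nonzero $\sigma\in\Sigma$ lies in $\Ker(M)$ is the image in $M_{m\times l}$ of the incidence variety over $\mathbb{P}(\Sigma)$ whose fibre over each point is the $m$-codimensional space $\{M:M\sigma=0\}$; it therefore has dimension at most $ml-m+d<ml$, hence is proper, and a generic $M$ works. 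If $d\ge m$: since $\prod a_s>\sum(a_s-1)$ for integers $a_s\ge2$ (immediate induction) we have $l>d\ge m$, so every $M$ has $\Ker(M)\ne\{0\}$ and $\dim\mathbb{P}(\Ker M)+\dim\mathbb{P}(\Sigma)\ge(l-m-1)+d\ge l-1$, and the projective dimension theorem forces $\mathbb{P}(\Ker M)\cap\mathbb{P}(\Sigma)\ne\emptyset$, so no valid $M$ exists. (One could instead write $M$ down explicitly by companion-matrix constructions as in the examples, but this count is shorter.)

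\emph{To extract the exception,} note that (\ref{eqc1}) has the above form with $d=d_M:=\sum_{i\in K_1}(n_i-1)+\sum_{j\in K_2}(n_j-1)$ for $M$ and $d=d_N:=\sum_{i\notin K_1}(n_i-1)+\sum_{j\notin K_2}(n_j-1)$ for $N$, and $d_M+d_N=2S$ with $S=\sum_{i\in K}(n_i-1)$. An easy induction on $k$ (base case $n_1n_2-2(n_1-1)-2(n_2-1)=(n_1-2)(n_2-2)\ge0$) shows $m=\prod_{i\in K}n_i\ge2S$ when $k\ge2$, with equality exactly when $k=2$ and $2\in\{n_1,n_2\}$. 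If $2S<m$ then $d_M,d_N\le2S<m$, so valid $M$ and $N$ exist for every $(K_1,K_2)$. If $2S=m$ then $d_M+d_N=m$, so valid $M$ and $N$ both exist if and only if $0<d_M<m$; since every $n_i\ge2$, $d_M=0$ forces $K_1=K_2=\emptyset$ while $d_M=m=2S$ forces $K_1=K_2=K$, so the only failures are $(K_1,K_2)\in\{(\emptyset,\emptyset),(K,K)\}$ --- precisely the stated exception.
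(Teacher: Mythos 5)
Your proof of the main equivalence follows the same route as the paper: check the easy direction directly, then vectorize $\phi$ into a linear map on $\bigotimes_{i=1}^k(\IC^{n_i}\otimes\IC^{n_i})$ that preserves nonzero decomposable tensors, invoke Westwick's structure theorem to split the $2k$ slots into two blocks defining $K_1$ and $K_2$, and read off the kernel conditions from the nonvanishing on Segre cones; this is exactly the paper's argument, and your version is correct. Where you genuinely diverge is the final existence/exception claim. The paper reduces it to the arithmetic inequality $m\ge\sum_{i\in K_1}(n_i-1)+\sum_{j\in K_2}(n_j-1)+1$ by citing Parthasarathy's theorem on the maximal dimension of a completely entangled subspace, whereas you prove the same criterion $d<m$ from scratch: an incidence-variety dimension count shows a generic $M$ works when $d<m$, and the projective dimension theorem in $\mathbb{P}^{l-1}$ (using $\dim\mathbb{P}(\Ker M)\ge l-m-1$ and $\dim\mathbb{P}(\Sigma)=d$) rules out any $M$ when $d\ge m$. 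This makes the argument self-contained at the cost of invoking standard algebraic geometry over $\IC$; the paper's citation is shorter but outsources precisely this count. Your subsequent bookkeeping with $d_M+d_N=2S$ and the identity $n_1n_2-2(n_1-1)-2(n_2-1)=(n_1-2)(n_2-2)$ recovers the exceptional cases $(K_1,K_2)\in\{(\emptyset,\emptyset),(K,K)\}$ with $k=2$ and $2\in\{n_1,n_2\}$, matching the paper's Proposition on existence (which treats $M$ alone and leaves $N$ to symmetry, where you handle both at once).
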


\begin{proof}
The necessary part is clear. For the sufficient part, define a linear map $f: \IC^{m^2} \to \IC^{m^2}$ such that
$$f\left(\bigotimes_{i=1}^k (x_i \otimes y_i) \right) = \vect\left( \phi\left( \bigotimes_{i=1}^k x_iy_i^T \right) \right)
\quad\hbox{for all}\quad x_i,y_i \in \IC^{n_i},$$
and by linearity, extend the definition of $f$ to all vectors in $\IC^{m^2}$.
Recall that $\vect(A) = x \otimes y$ if $A = xy^T$ is rank one.
As $\phi$ satisfies (\ref{rank}), the map $f$ will send all
nonzero vectors of the form $\bigotimes_{i=1}^k (x_i \otimes y_i)$ to  some nonzero vectors of the form $u \otimes v \in \IC^m \otimes \IC^m$,
i.e., $f$ sends nonzero decomposable elements of $\bigotimes_{i=1}^k \IC^{n_i} \otimes \IC^{n_i}$ to
nonzero decomposable elements of $\IC^m \otimes \IC^m$.
Applying Theorem \ref{West} (\cite[Theorem 3.4]{RW}) with $p = 2k$ and $q = 2$,
there are two partitions $\{K_1, \overline{K_1}\}$ and $\{K_2, \overline{K_2}\}$ of $K= \{1,\dots,k\}$,
and linear maps $f_1: \IC^{m_1m_2} \to \IC^m$ and $f_2: \IC^{m^2/(m_1m_2)} \to \IC^m$,
where $m_t$ is defined as in statement of the theorem,
such that
$$f\left(\bigotimes_{i=1}^k (x_i \otimes y_i) \right)  = f_1 \left( \bigotimes_{i\in K_1} x_i \otimes \bigotimes_{j\in K_2} y_j \right) \otimes
f_2 \left( \bigotimes_{i \in \overline{K_1}} x_i \otimes \bigotimes_{j\in \overline{K_2}} y_j \right).$$
As $f_1$ and $f_2$ are linear, there exist an $m \times m_1m_2$ matrix $M$ and an $m \times m^2/(m_1m_2)$ matrix $N$ such that $f_1(z) = Mz$ and $f_2(w) = Nw$.
Thus, $\phi$ has the form as described in (\ref{eq11}).
Further, $f_1(z) \ne 0$ for all $z \in \bigotimes_{i\in K_1} \IC^{n_i} \otimes \bigotimes_{j\in K_2} \IC^{n_j}$
and $f_2(w) \ne 0$ for all $w \in \bigotimes_{i\notin {K_1}} \IC^{n_i} \otimes \bigotimes_{j\notin {K_2}} \IC^{n_j}$ as $\overline{K_j} = K \setminus K_j$,
and hence,
$M$ and $N$ satisfy the condition (\ref{eqc1}).
The last statement will be confirmed by Proposition \ref{prop2}.
\end{proof}

Now the equivalence of Theorems \ref{th0} and \ref{th1} can be seen as follows.

\noindent{\it Proof of Theorem \ref{th0}.}
Suppose $\phi$ satisfies the rank condition (\ref{rank0}). Then Theorem \ref{th1} implies that $\phi$ has the form
(\ref{eq11}) with $M$ and $N$ satisfying (\ref{eqc1}).
Set
$P_1 = K_1 \backslash K_2$, $P_2 = K_2 \backslash K_1$, $P_3 = K_1 \cap K_2$, and $P_4 = K \backslash (K_1 \cup K_2)$.
First, there exists a permutation matrix $Q_x$ such that
for any $x_i,y_i \in \IC^{n_i}$,
$$
Q_x \left( \bigotimes_{i\in P_1} x_i  \otimes \bigotimes_{j\in P_2} y_j
\otimes \bigotimes_{k\in P_3} (x_k \otimes y_k) \right)
= \left( \bigotimes_{i\in P_1} x_i \otimes \bigotimes_{i\in P_3} x_i \otimes \bigotimes_{j\in P_2} y_j
\otimes \bigotimes_{j\in P_3} y_j \right)
=  \left( \bigotimes_{i\in K_1} x_i \otimes \bigotimes_{j\in K_2} y_j \right).
$$
Similarly, there exists another permutation matrix $Q_y$ such that
for any $x_i,y_i \in \IC^{n_i}$,
$$Q_y \left(\bigotimes_{j\in P_1} y_j \otimes \bigotimes_{i\in P_2} x_i \otimes
 \bigotimes_{k\in P_4} (x_k \otimes y_k) \right)
 = \left(\bigotimes_{i\in P_2} x_i \otimes \bigotimes_{i\in P_4} x_i \otimes
\bigotimes_{j\in P_1} y_j \otimes \bigotimes_{j\in P_4} y_j \right)
= \left(\bigotimes_{i\notin K_1} x_i \otimes \bigotimes_{j\notin K_2} y_j \right).
$$
Now for any rank one matrix $A_i = x_iy_i^T$ with $x_i,y_i \in \IC^{n_i}$, $i = 1,\dots, k$,
\begin{eqnarray*}
&&\phi\left( A_1\otimes \cdots \otimes A_k \right) 
= \phi\left( x_1y_1^T\otimes \cdots \otimes x_ky_k^T \right)  \\[1mm]
&=& M \left( \bigotimes_{i\in K_1} x_i \otimes \bigotimes_{j\in K_2} y_j \right)
\left(\bigotimes_{i\notin K_1} x_i \otimes \bigotimes_{j\notin K_2} y_j \right)^T N^T \\
&=& M Q_x \left( \bigotimes_{i\in P_1} x_i  \otimes \bigotimes_{j\in P_2} y_j
\otimes \bigotimes_{k\in P_3} (x_k \otimes y_k) \right)
\left(\bigotimes_{j\in P_1} y_j \otimes \bigotimes_{i\in P_2} x_i \otimes
 \bigotimes_{k\in P_4} (x_k \otimes y_k) \right)^TQ_y^T  N^T \\[1mm]
&=& M Q_x \left( \bigotimes_{i\in P_1} x_i  \otimes \bigotimes_{j\in P_2} y_j
\otimes \bigotimes_{k\in P_3} (x_k \otimes y_k) \right)
\left(\bigotimes_{j\in P_1} y_j^T \otimes \bigotimes_{i\in P_2} x_i^T \otimes
 \bigotimes_{k\in P_4} (x_k \otimes y_k)^T \right) Q_y^T  N^T \\
&=& M Q_x \left( \left( \bigotimes_{i\in P_1} x_i  \right) \left(  \bigotimes_{i\in P_1} y_i^T \right)
\otimes    \left( \bigotimes_{j\in P_2} y_j  \right) \left(  \bigotimes_{j\in P_2} x_j^T \right)
\otimes \bigotimes_{k\in P_3} (x_k \otimes y_k)
\otimes \bigotimes_{k\in P_4} (x_k \otimes y_k)^T \right) Q_y^T  N^T \\
&=& M Q_x \left( \bigotimes_{i\in P_1} x_i  y_i^T \otimes \bigotimes_{j\in P_2} y_j x_j^T
\otimes \bigotimes_{k\in P_3} (x_k \otimes y_k)
\otimes \bigotimes_{k\in P_4} (x_k \otimes y_k)^T \right) Q_y^T  N^T \\
&=& M Q_x \left( \bigotimes_{i\in P_1} x_i  y_i^T \otimes \bigotimes_{j\in P_2} (x_j y_j^T)^T
\otimes \bigotimes_{k\in P_3} \vect(x_k y_k^T)
\otimes \bigotimes_{k\in P_4} \vect^T (x_k y_k^T)\right) Q_y^T  N^T \\
&=& M Q_x \left( \bigotimes_{i\in P_1} A_i \otimes \bigotimes_{j\in P_2} (A_j)^T
\otimes \bigotimes_{k\in P_3} \vect(A_k)
\otimes \bigotimes_{k\in P_4} \vect^T (A_k)\right) Q_y^T  N^T.
\end{eqnarray*}
By linearity, the equality holds for any matrix $A_i \in M_{n_i}$ and hence we have (\ref{eqb11}).
Finally, the kernel condition can be easily reduced from (\ref{eqc1}).
\qed

\medskip
Next we show that the matrices $M$ and $N$ in Theorem \ref{th1} (equivalently, Theorem \ref{th0})
always exist, except for two special cases, namely, when $k = 2$, $K = \{1,2\}$, $2 \in \{n_1,n_2\}$,
and $K_1 = K_2 = K$ or $K_1 =K_2 = \emptyset$.
For simplicity, we focus on the existence of $M$.
For positive integers $p_1,\dots,p_r$,
denote by $\mathcal{E}(p_1,\ldots,p_r)$ the collection of subspaces $\cV$ of
$\IC^{p_1\cdots p_r}$ such that
$$\cV \cap \cS\left( \IC^{p_1}\otimes \cdots\otimes\IC^{p_r} \right)=\{0\}.$$
The subspace $\cV$ is called a completely entangled subspace in \cite{KP}.
In the same paper, the author also obtained
the maximum dimension of $\cV$ in $\cE(p_1,\dots,p_r)$ as follows.


\begin{proposition}{\rm \cite[Theorem 1.5]{KP}}\label{le1}
Let $p_1,\ldots,p_r$ be positive integers. Then
$$\max_{\cV \in\cE(p_1,\ldots,p_r)}\dim \cV =\prod_{i=1}^r p_i-\sum_{i=1}^r p_i+r-1.$$
\end{proposition}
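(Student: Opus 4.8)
The plan is to establish the formula by proving two matching inequalities: a lower bound via an explicit construction of a completely entangled subspace of the stated dimension, and an upper bound ruling out anything larger. Write $P=\prod_{i=1}^r p_i$ for the dimension of the ambient space $\IC^{P}$ and $D=\sum_{i=1}^r(p_i-1)=\sum_{i=1}^r p_i-r$; the target dimension is then $P-D-1=\prod_{i=1}^r p_i-\sum_{i=1}^r p_i+r-1$.

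For the lower bound I would fix the standard basis $e_0^{(i)},\dots,e_{p_i-1}^{(i)}$ of $\IC^{p_i}$ and define a single linear map
\[
T:\IC^{P}\to \IC^{D+1},\qquad T\!\left(e_{j_1}^{(1)}\otimes\cdots\otimes e_{j_r}^{(r)}\right)=z^{\,j_1+\cdots+j_r},
\]
where I identify $\IC^{D+1}$ with the space of polynomials in $z$ of degree at most $D$. The key point is that for a product vector $x_1\otimes\cdots\otimes x_r$ with $x_i=\sum_j x_i^{(j)} e_j^{(i)}$ one computes $T(x_1\otimes\cdots\otimes x_r)=\prod_{i=1}^r P_i(z)$ with $P_i(z)=\sum_{j=0}^{p_i-1} x_i^{(j)} z^{j}$. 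If every $x_i\neq 0$ then each $P_i$ is a nonzero polynomial, so their product is nonzero; hence $T$ annihilates no nonzero product vector, i.e. $\Ker(T)\in\cE(p_1,\dots,p_r)$. Since the monomials $1,z,\dots,z^{D}$ are all attained (the total degree $j_1+\cdots+j_r$ takes every value from $0$ to $D$), $T$ is surjective, so $\dim\Ker(T)=P-(D+1)=P-\sum_{i=1}^r p_i+r-1$, giving the lower bound.

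For the upper bound I would pass to projective space. The nonzero product vectors, taken up to scalars, form the Segre variety $\Sigma\subseteq\mathbb{P}^{P-1}$, the image of $\mathbb{P}^{p_1-1}\times\cdots\times\mathbb{P}^{p_r-1}$ under the Segre embedding, which is irreducible of dimension $D$. A subspace $\cS\in\cE(p_1,\dots,p_r)$ is precisely one whose projectivization $\mathbb{P}(\cS)$ is disjoint from $\Sigma$. By the projective dimension theorem, an irreducible variety of dimension $D$ and a linear subspace of dimension $\dim\cS-1$ inside $\mathbb{P}^{P-1}$ must meet whenever $D+(\dim\cS-1)\ge P-1$; disjointness therefore forces $D+\dim\cS-1\le P-2$, that is $\dim\cS\le P-D-1$. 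Combined with the construction this yields the claimed equality.

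The main obstacle is the optimality (upper bound): it rests on correctly identifying the dimension of the Segre variety and on the intersection theorem guaranteeing a common zero. Equivalently, and so as to keep the argument closer to the linear algebra of the paper, one can phrase it directly: each nonzero $w\in\cS^\perp$ gives a nonzero multilinear equation $\langle x_1\otimes\cdots\otimes x_r,\overline{w}\rangle=0$ on $\mathbb{P}^{p_1-1}\times\cdots\times\mathbb{P}^{p_r-1}$, and if $\dim\cS$ exceeded $P-D-1$ then $\dim\cS^\perp\le D$, so a system of at most $D$ such equations on this $D$-dimensional variety still has a common solution; that solution is a nonzero product vector lying in $\cS$, contradicting complete entanglement. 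The care needed here is to verify that each such equation is a genuine (nonzero) hypersurface, which holds because a nonzero multilinear form cannot vanish identically on a product of projective spaces, and that at most $D$ hypersurfaces never cut the $D$-dimensional Segre variety down to the empty set.
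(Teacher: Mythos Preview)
The paper does not give its own proof of this proposition; it is quoted verbatim from Parthasarathy \cite[Theorem~1.5]{KP} and used as a black box in the proof of Proposition~\ref{prop2}. So there is nothing in the present paper to compare your argument against.

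That said, your proof is correct on both halves. The lower-bound construction via the ``total degree'' map $T$ is exactly Parthasarathy's construction in \cite{KP}, phrased slightly differently: he takes the orthogonal complement of the span of the rational normal-type curve $\lambda\mapsto(1,\lambda,\dots,\lambda^{p_1-1})\otimes\cdots\otimes(1,\lambda,\dots,\lambda^{p_r-1})$, whose span is precisely your range $\IC^{D+1}$ graded by total degree, so the resulting subspace is the same. For the upper bound, your use of the projective dimension theorem on the Segre variety is the standard and cleanest route; your alternative formulation via $\cS^\perp$ and counting hypersurface equations on $\mathbb{P}^{p_1-1}\times\cdots\times\mathbb{P}^{p_r-1}$ is really the same argument unwound, and the caveat you flag (that a nonzero multilinear form does not vanish identically on the product) is easily checked. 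Either version is fine.
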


It has to mention that an explicit construction for maximum completely entangled subspace for bipartite case ($r = 2$) was also given in \cite{KP}.
Based on the above proposition, we can deduce the following result which showed that the matrix $M$ always exists, except for one special case.

 \begin{proposition}\label{prop2}
Let $n_1,\ldots,n_k $ be integers larger than or equal to 2, $K=\{1,\ldots,k\}$, and $K_1,K_2 \subseteq K$.
Define $m = \prod_{i \in K} n_i$ and $m_t = \prod_{i\in K_t} n_i$ for $t = 1,2$.
Then there always exists an $m \times m_1m_2$ matrix $M$ such that
$$\Ker(M) \cap \cS\left( \bigotimes_{i\in K_1} \IC^{n_i} \otimes \bigotimes_{j\in K_2} \IC^{n_j} \right) = \{0\},$$
except the case when $K_1 = K_2 = K = \{1,2\}$ and $2 \in \{n_1,n_2\}$.
\end{proposition}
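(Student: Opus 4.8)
The plan is to translate the existence of $M$ into a statement about completely entangled subspaces and then invoke Proposition~\ref{le1}. Put $N_0 = m_1m_2$ and regard $\IC^{N_0}$ as the tensor product of the $r := |K_1|+|K_2|$ spaces obtained by listing $\IC^{n_i}$ once for each $i\in K_1$ and once more for each $i\in K_2$, so that the decomposable tensors of $\IC^{N_0}$ in this factorization are exactly the vectors $\bigotimes_{i\in K_1}x_i\otimes\bigotimes_{j\in K_2}y_j$. An $m\times N_0$ matrix $M$ satisfies the required condition precisely when $\Ker(M)$ contains no nonzero such decomposable tensor, i.e.\ when $\Ker(M)$ is a completely entangled subspace of $\IC^{N_0}$. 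Now $\dim\Ker(M) = N_0-\rank(M)$ takes every value in $\{N_0-m,\dots,N_0\}$ as $M$ ranges over $m\times N_0$ matrices, and every subspace of a completely entangled subspace is again completely entangled; hence such an $M$ exists if and only if $\IC^{N_0}$ contains a completely entangled subspace of dimension at least $N_0-m$. By Proposition~\ref{le1}, the largest such dimension is $N_0-\sum_{i\in K_1}n_i-\sum_{j\in K_2}n_j+|K_1|+|K_2|-1$, so the condition becomes
\begin{equation}\label{eqstar}
m\ \ge\ 1+\sum_{i\in K_1}(n_i-1)+\sum_{j\in K_2}(n_j-1).
\end{equation}
(When $N_0\le m$ this is vacuous and one may simply take $\Ker(M)=\{0\}$; and in any case where $(\ref{eqstar})$ fails one has $N_0=m^2>m$, so every $\Ker(M)$ has dimension $>N_0-m>$ the maximal completely entangled dimension and therefore contains a nonzero decomposable tensor, which gives the non-existence half of the statement.) Thus the proposition reduces to deciding $(\ref{eqstar})$.

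For this, write $a_i := n_i-1\ge 1$, so $m = \prod_{i\in K}(1+a_i)$ and the right-hand side of $(\ref{eqstar})$ equals $1+\sum_{i\in K_1\cup K_2}a_i+\sum_{i\in C}a_i$, where $C := K_1\cap K_2$. I will use only the elementary bounds $\prod_{i\in S}(1+a_i)\ge 1+\sum_{i\in S}a_i$ and $\prod_{i\in S}(1+a_i)\ge 2^{|S|}$ (valid since each $a_i\ge 1$), together with $m\ge\prod_{i\in K_1\cup K_2}(1+a_i)$ (the discarded factors being $\ge 1$). The verification splits into four cases. (i) If $C=\emptyset$, then $m\ge\prod_{i\in K_1\cup K_2}(1+a_i)\ge 1+\sum_{i\in K_1\cup K_2}a_i$, which is $(\ref{eqstar})$. (ii) If $C\ne\emptyset$ and $F := (K_1\cup K_2)\setminus C\ne\emptyset$, then
$$\prod_{i\in K_1\cup K_2}(1+a_i) = \left(\prod_{i\in C}(1+a_i)\right)\left(\prod_{i\in F}(1+a_i)\right)\ge\left(1+\sum_{i\in C}a_i\right)\left(1+\sum_{i\in F}a_i\right),$$
and since $\sum_{i\in F}a_i\ge 1$, expanding the right side absorbs the extra copy of $\sum_{i\in C}a_i$, giving $(\ref{eqstar})$. (iii) If $K_1=K_2=C\subsetneq K$, then $m\ge\big(\prod_{i\in C}(1+a_i)\big)\,2^{|K\setminus C|}\ge 2\big(1+\sum_{i\in C}a_i\big)>1+2\sum_{i\in C}a_i$, which is $(\ref{eqstar})$. (iv) In the remaining case $K_1=K_2=K$, inequality $(\ref{eqstar})$ reads $\prod_{i\in K}(1+a_i)\ge 1+2\sum_{i\in K}a_i$.

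Case (iv) is the main obstacle. For $|K|\ge 3$ it holds: it is true at $a_i=1$ for every $i$, where it reads $2^{|K|}\ge 1+2|K|$, and the function ``left side minus right side'' is nondecreasing in each $a_i$ on $[1,\infty)$ because its partial derivative in $a_i$ is $\prod_{j\ne i}(1+a_j)-2\ge 2^{|K|-1}-2\ge 0$. For $|K|=2$ it becomes $(1+a_1)(1+a_2)\ge 1+2(a_1+a_2)$, equivalently $(a_1-1)(a_2-1)\ge 1$, which (given $a_1,a_2\ge 1$) holds exactly when $a_1,a_2\ge 2$, i.e.\ $n_1,n_2\ge 3$; it fails precisely when $2\in\{n_1,n_2\}$, giving the exceptional case $k=2$, $K_1=K_2=K=\{1,2\}$, $2\in\{n_1,n_2\}$. (The degenerate $|K|=1$ case with $K_1=K_2=K$ is likewise exceptional; we take $k\ge 2$.) Combining (i)--(iv) establishes $(\ref{eqstar})$ in every case other than the exceptional one, and the converse remark in the first paragraph handles that case, completing the argument.
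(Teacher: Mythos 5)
Your proposal is correct and follows essentially the same route as the paper: both reduce the existence of $M$ to the inequality $m \ge 1+\sum_{i\in K_1}(n_i-1)+\sum_{j\in K_2}(n_j-1)$ by comparing the minimal possible dimension of $\Ker(M)$, namely $m_1m_2-m$, with Parthasarathy's maximal dimension of a completely entangled subspace from Proposition~\ref{le1}, and then handle the non-existence in the exceptional case by the same dimension count. The only difference is bookkeeping in verifying the inequality (your case analysis on $K_1\cap K_2$ versus the paper's direct bound for $k\ge 3$ and the identity $n_1n_2 = 2\sum_{j}(n_j-1)+\prod_j(n_j-2)$ for $k=2$), plus a harmless typo where $\dim\Ker(M) > N_0-m$ should read $\dim\Ker(M)\ge N_0-m$.
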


\begin{proof}
If $m \ge m_1 m_2$, then any $m \times m_1m_2$ matrix with full column rank, i.e., $\rank(M) = m_1m_2$ will satisfy
the kernel condition. Let us assume that $m < m_1m_2$.
Notice that $\Ker(M)$ is a subspace of $\IC^{m_1m_2}$.
By Proposition \ref{le1}, the maximum dimension of subspace of $\IC^{m_1m_2}$ which
does not contain any nonzero element of $\cS\left(\bigotimes_{i\in K_1} \IC^{n_i} \otimes \bigotimes_{j\in K_2} \IC^{n_j} \right)$ is equal to
$$d(K_1,K_2) := m_1m_2 - \sum_{i \in K_1} n_i - \sum_{j\in K_2} n_j + |K_1| + |K_2| - 1
= m_1m_2 - \sum_{i\in K_1} (n_i-1) - \sum_{j \in K_2} (n_j -1) -1.$$
On the other hand, $\dim \Ker(M) \ge m_1m_2 -m$ for all $m \times m_1m_2$ matrices and the equal
holds when $M$ has full row rank, i.e., $\rank(M) = m$.
Therefore, the $m \times m_1m_2$ matrix $M$ satisfying the kernel condition will always exist when
$d(K_1,K_2) \ge m_1m_2 - m$, or equivalently,
\begin{equation}\label{ineq}
m \ge  \sum_{i\in K_1} (n_i-1) + \sum_{j \in K_2} (n_j -1)  + 1.
\end{equation}
Notice that for any positive integers $a_1,\dots,a_k$,
$$\prod_{j=1}^k (a_j+1) \ge \sum_{1\le i < j \le k} a_ia_j +
\sum_{j=1}^k a_j  + 1
\ge \sum_{j=1}^k a_j + \sum_{j=1}^k a_j + 1
= 2 \sum_{j=1}^k a_j  + 1 \quad \hbox{if} \quad k \ge 3.$$
Assume $k \ge 3$ and take $a_j = n_j -1$ in the above equation, we have
$$m = \prod_{i\in K} n_i \ge 2 \sum_{i\in K} (n_i-1) + 1
\ge \sum_{i\in K_1} (n_i - 1) + \sum_{j \in K_2} (n_j - 1) + 1.$$
Therefore, the matrix $M$ exists when $k \ge 3$.
For $k = 2$,
$$m = \prod_{j=1}^2 n_j = 2\sum_{j=1}^2 (n_j-1) + \prod_{j=1}^2 (n_j-2)
\ge \sum_{i\in K_1} (n_i-1) + \sum_{j\in K_2} (n_j -1) + 0,$$
and the equality holds if and only if $K_1 = K_2 = K = \{1,2\}$ and at least one of $n_i$ is equal to $2$.
In all other cases, the above inequality is strict, and therefore, the inequality (\ref{ineq}) holds.
Finally, suppose $K_1 = K_2 = K = \{1,2\}$ and $2\in \{n_1,n_2\}$. We may assume $n_1 = 2$,
then
$$d(K_1,K_2) = 4n_2^2 - 2n_2 - 1 < 4n_2^2 - 2n_2 \le \dim \Ker(M) \quad \hbox{for any $(2n_2) \times (2n_2)(2n_2)$
matrix $M$}.$$
Therefore, there is no matrix $M$ satisfying the kernel condition in this case.
\end{proof}
After we obtained the above result, it has come to our attention that Lim \cite{Lim3} has already given a necessary and sufficient condition for the existence of linear maps preserving nonzero decomposable tensor for any algebraically close field, see \cite[Proposition 2.8]{Lim3}. This existence condition is actually equivalent to the inequality (\ref{ineq}) in our proof. Also a similar conclusion on linear maps on matrix space is obtained in a recent work of Lim in \cite{Lim4} too.

\medskip
Finally, we apply Theorem \ref{th0} to obtain the following corollaries, which generalize the results of Zheng et al. \cite{ZXF} and Lim \cite{LIM}.

\begin{corollary}\label{co1}
Let $n_1,\ldots,n_k $ be integers larger than or equal to 2 and  let $m=\prod_{i=1}^kn_i$. Suppose $\phi: M_{m}\rightarrow M_{m}$ is a linear map. If   \begin{equation*}
\rank(\phi(A_1\otimes \cdots \otimes A_k))=1\quad\hbox{whenever}\quad\rank (A_1\otimes \cdots \otimes A_k)=1 \quad \hbox{for all } A_i \in M_{n_i},\, i = 1,\dots,k,
\end{equation*}
and there is a matrix $X_1\otimes \cdots \otimes X_k$ with $X_i\in M_{n_i}$ and $\rank X_i>1$ for $i=1,\ldots,k$ such that
$$\rank(\phi(X_1\otimes \cdots \otimes X_k))=\rank (X_1\otimes \cdots \otimes X_k),$$
  then $\phi$ has the form
 \begin{equation*}
 \phi(A_1\otimes \cdots \otimes A_k)=M(\psi_1(A_1)\otimes \cdots\otimes \psi_k(A_k))N^T
 \end{equation*}
 for all $A_i\in M_{n_i}$ with $i=1,\ldots,k$, where $\psi_j$ is the identity map or the transpose map for $j=1,\ldots,k$, and $M, N\in M_m$ satisfy
$$\Ker(M) \cap \cS\left(\IC^{n_1} \otimes \cdots \otimes \IC^{n_k}\right) = \{0\}
\quad\hbox{and}\quad
\Ker(N) \cap \cS\left(\IC^{n_1} \otimes \cdots \otimes \IC^{n_k}\right) = \{0\}.$$
\end{corollary}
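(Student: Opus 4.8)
The plan is to combine Theorem~\ref{th0} with the extra rank-preserving datum. Theorem~\ref{th0} already forces $\phi$ into the form (\ref{eqb11}) for some partition $\{P_1,P_2,P_3,P_4\}$ of $K$; I then use the equality $\rank(\phi(X_1\otimes\cdots\otimes X_k))=\rank(X_1\otimes\cdots\otimes X_k)$ to rule out the two blocks $P_3$ and $P_4$, i.e.\ the ones that route the factors through $\vect$. Once $P_3=P_4=\emptyset$, the matrices $M$ and $N$ become $m\times m$, the map (\ref{eqb11}) is a tensor product of identity/transpose maps sandwiched between $M$ and $N^T$, and a permutation of the tensor factors brings it to the stated shape while carrying the kernel conditions along.

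In detail: since $\phi$ sends rank-one tensor products to rank-one matrices, Theorem~\ref{th0} provides a partition $\{P_1,P_2,P_3,P_4\}$ of $K$, an $m\times p_1p_2p_3^2$ matrix $M$ and an $m\times p_1p_2p_4^2$ matrix $N$ with the indicated kernel properties such that (\ref{eqb11}) holds. Substituting $A_i=X_i$: for $i\in P_3\cup P_4$ the hypothesis $\rank X_i\ge 2$ forces $X_i\ne 0$, hence $\vect(X_i)$ is a nonzero (column or row) vector of rank one; since the rank of a Kronecker product is the product of the ranks, the middle factor of (\ref{eqb11}) evaluated at $X_1,\dots,X_k$ has rank $\prod_{i\in P_1\cup P_2}\rank(X_i)$. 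As the rank of a matrix product never exceeds the rank of a factor,
$$\rank\bigl(\phi(X_1\otimes\cdots\otimes X_k)\bigr)\ \le\ \prod_{i\in P_1\cup P_2}\rank(X_i).$$
But $\rank(X_1\otimes\cdots\otimes X_k)=\prod_{i\in K}\rank(X_i)$, so the assumed equality $\rank(\phi(X_1\otimes\cdots\otimes X_k))=\rank(X_1\otimes\cdots\otimes X_k)$ forces $\prod_{i\in P_3\cup P_4}\rank(X_i)=1$; as each $\rank X_i\ge 2$, this is only possible when $P_3=P_4=\emptyset$.

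With $P_3=P_4=\emptyset$ one has $p_3=p_4=1$ and $p_1p_2=m$, so $M,N\in M_m$; the kernel conditions of Theorem~\ref{th0} read $\Ker(M)\cap\bigl(\bigotimes_{i\in P_1}\IC^{n_i}\otimes\bigotimes_{i\in P_2}\IC^{n_i}\bigr)=\{0\}$ and the analogue for $N$; and (\ref{eqb11}) collapses to $\phi(A_1\otimes\cdots\otimes A_k)=M\bigl(\bigotimes_{i\in P_1}A_i\otimes\bigotimes_{i\in P_2}A_i^T\bigr)N^T$. Let $P\in M_m$ be the permutation matrix that reorders the tensor factors back into the natural order $1,\dots,k$, so that $\bigotimes_{i\in P_1}A_i\otimes\bigotimes_{i\in P_2}A_i^T=P\bigl(\psi_1(A_1)\otimes\cdots\otimes\psi_k(A_k)\bigr)P^T$ with $\psi_i$ the identity map for $i\in P_1$ and the transpose map for $i\in P_2$. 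Then $\phi(A_1\otimes\cdots\otimes A_k)=(MP)\bigl(\psi_1(A_1)\otimes\cdots\otimes\psi_k(A_k)\bigr)(NP)^T$. Since reordering tensor factors is a linear bijection sending decomposable vectors to decomposable vectors, $P$ maps $\IC^{n_1}\otimes\cdots\otimes\IC^{n_k}$ onto $\bigotimes_{i\in P_1}\IC^{n_i}\otimes\bigotimes_{i\in P_2}\IC^{n_i}$, whence $\Ker(MP)\cap\bigl(\IC^{n_1}\otimes\cdots\otimes\IC^{n_k}\bigr)=P^T\bigl(\Ker(M)\cap(\bigotimes_{i\in P_1}\IC^{n_i}\otimes\bigotimes_{i\in P_2}\IC^{n_i})\bigr)=\{0\}$, and likewise for $NP$; renaming $MP,NP$ as $M,N$ gives the asserted form.

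The argument has no genuine obstacle once Theorem~\ref{th0} is available. The two small points that require care are: that $\vect(X_i)\ne 0$ for $i\in P_3\cup P_4$, so it contributes a factor $1$ rather than $0$ to the rank of the middle matrix --- this is exactly where the strengthened hypothesis $\rank X_i\ge 2$, rather than $X_i$ being arbitrary, enters --- and the routine bookkeeping showing that the factor permutation transports the kernel conditions verbatim. The single conceptual step is the displayed rank inequality, which localizes any rank drop of $\phi$ on tensor products to the blocks $P_3$ and $P_4$.
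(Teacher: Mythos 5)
Your argument is correct and follows essentially the same route as the paper: apply Theorem~\ref{th0} to get the form (\ref{eqb11}), then use $\rank(\vect(X_i))=1$ together with the multiplicativity of rank under Kronecker products and the submultiplicativity under matrix products to force $P_3\cup P_4=\emptyset$. The only difference is that you spell out the final permutation bookkeeping that transports the kernel conditions, which the paper leaves implicit.
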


\begin{proof}
By Theorem \ref{th0}, $\phi$ has the form (\ref{eqb11}) with partition $\{P_1,P_2,P_3,P_4\}$
as defined in the theorem. Notice that
$\rank(\vect(A)) = 1$ for any matrix $A$.
Suppose $P_3 \cup P_4 \ne \emptyset$. Then
\begin{eqnarray*}
\rank\left(\phi(X_1\otimes \cdots \otimes X_k) \right)
&\le&
\left( \prod_{j \in P_1\cup P_2} \rank(X_j) \right)
\left( \prod_{j \in P_3 \cup P_4} \rank(\vect(X_j)) \right) \\[1mm]
&=&
\left( \prod_{j \in P_1 \cup P_2} \rank(X_j) \right)
< \rank\left(X_1\otimes \cdots \otimes X_k\right),
\end{eqnarray*}
which contradicts the assumption. So $P_3 \cup P_4 = \emptyset$ and $\phi$ has the asserted from.
\end{proof}

\begin{corollary}\label{co2}
Let $n_1,\ldots,n_k $ be integers larger than or equal to 2 and  let $m=\prod_{i=1}^kn_i$. Suppose $\phi: M_{m}\rightarrow M_{m}$ is a linear map.
Then
\begin{equation*}
\rank(\phi(A_1\otimes \cdots \otimes A_k))=1\quad\hbox{whenever}\quad\rank (A_1\otimes \cdots \otimes A_k)=1 \quad \hbox{for all } A_i \in M_{n_i},\, i = 1,\dots,k
\end{equation*}
and $\phi(X_1\otimes \cdots \otimes X_k)$ is nonsingular for some
$X_1\otimes \cdots \otimes X_k$ with $X_i\in M_{n_i}$
if and only if
there exist nonsingular matrices $M, N\in M_m$ such that
 \begin{equation*}
 \phi(A_1\otimes \cdots \otimes A_k)=M\left(\psi_1(A_1)\otimes \cdots\otimes \psi_k(A_k)\right)N
\quad\hbox{ for all}\quad A_i\in M_{n_i},\, i=1,\ldots,k,
 \end{equation*}
 where $\psi_j$, $j=1,\dots,k$ is either the identity map or the transpose map.
\end{corollary}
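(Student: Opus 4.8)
The plan is to derive this from Corollary \ref{co1} together with an analysis of when the matrices $M$ and $N$ in \eqref{eqb11} can be chosen to be square and nonsingular. First I would dispense with the easy direction: if $\phi$ has the asserted form $A_1\otimes\cdots\otimes A_k \mapsto M(\psi_1(A_1)\otimes\cdots\otimes\psi_k(A_k))N$ with $M,N$ nonsingular and each $\psi_j$ the identity or transpose, then $\phi$ clearly sends rank-one tensor products to rank-one matrices, and $\phi(I_{n_1}\otimes\cdots\otimes I_{n_k}) = MN$ is nonsingular, so the ``if'' direction holds with $X_i = I_{n_i}$.

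For the forward direction, suppose $\phi$ satisfies the rank-one condition and $\phi(X_1\otimes\cdots\otimes X_k)$ is nonsingular for some $X_i\in M_{n_i}$. The key observation is that nonsingularity forces each $X_i$ to have full rank $n_i$: indeed, by Theorem \ref{th0}, $\phi(X_1\otimes\cdots\otimes X_k) = M\big(\bigotimes_{i\in P_1}X_i \otimes \bigotimes_{i\in P_2}X_i^T\otimes \bigotimes_{i\in P_3}\vect(X_i)\otimes \bigotimes_{i\in P_4}\vect^T(X_i)\big)N^T$, and since $\rank(\vect(X_i)) = 1$ and the rank of the middle tensor factor is at most $\big(\prod_{i\in P_1\cup P_2}\rank(X_i)\big)\cdot 1^{|P_3\cup P_4|} \le \prod_{i\in P_1\cup P_2} n_i$, which is at most $m$ with equality only if $P_3 = P_4 = \emptyset$ and each $X_i$ ($i\in P_1\cup P_2$) is nonsingular. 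Hence each $X_i$ has rank $n_i > 1$ and $\rank(X_1\otimes\cdots\otimes X_k) = m = \rank(\phi(X_1\otimes\cdots\otimes X_k))$. This puts us exactly in the hypothesis of Corollary \ref{co1}, which yields $\phi(A_1\otimes\cdots\otimes A_k) = M(\psi_1(A_1)\otimes\cdots\otimes\psi_k(A_k))N^T$ with each $\psi_j$ the identity or transpose and $M,N\in M_m$ satisfying the kernel conditions $\Ker(M)\cap(\IC^{n_1}\otimes\cdots\otimes\IC^{n_k}) = \{0\}$ and likewise for $N$.

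It remains to upgrade $M$ and $N$ from ``kernel-condition'' matrices to genuinely nonsingular ones. Here I would use the nonsingularity of $\phi(X_1\otimes\cdots\otimes X_k)$ directly: writing $\tilde X_j = \psi_j(X_j)$, which is nonsingular, we have $M(\tilde X_1\otimes\cdots\otimes\tilde X_k)N^T$ nonsingular, and since $\tilde X_1\otimes\cdots\otimes\tilde X_k$ is an invertible $m\times m$ matrix, this forces both $M$ and $N$ to be nonsingular $m\times m$ matrices. Absorbing the transpose on $N$ (replace $N^T$ by $N$, a harmless renaming since $N$ ranges over all nonsingular matrices) gives the stated form. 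I expect the main obstacle to be bookkeeping rather than conceptual: one must be careful that in Theorem \ref{th0} the outer matrices already have the right shapes forced by $P_3 = P_4 = \emptyset$ (so $p_3 = p_4 = 1$ and $M, N$ are $m\times p_1p_2 = m\times m$), and that the rank inequality for tensor products is applied with the correct accounting of the $\vect$ factors; once $P_3 = P_4 = \emptyset$ is established the rest is immediate.
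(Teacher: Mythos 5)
Your proposal is correct and follows essentially the same route as the paper: invoke Theorem \ref{th0} (equivalently Corollary \ref{co1}) to force $P_3\cup P_4=\emptyset$ via the rank bound coming from $\rank(\vect(X_i))=1$, and then read off nonsingularity of $M$ and $N$ from the nonsingularity of $M(\tilde X_1\otimes\cdots\otimes\tilde X_k)N^T$. You simply spell out the details that the paper's two-line proof leaves implicit.
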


\begin{proof}
The sufficient part is clear. For the necessary part, by Theorem \ref{th1} and a similar argument as in the proof of Corollary \ref{co1}, one can show that
$P_3 \cup P_4 = \emptyset$ and $M$ and $N$ are both nonsingular. Then the result follows.
\end{proof}

\section*{Acknowledgement}
The authors would like to thank the referee for his/her valuable comments which helped to improve this manuscript.
They also thank Professor Ming Huat Lim for his useful comments, drawing their attention to Ref \cite{Lim3} and sharing them his preprint \cite{Lim4}.
They are also grateful to Professor Man-Duen Choi and Professor Chi-Kwong Li for helpful discussions.
The research of Huang
was supported by the NSFC grant 11401197 and a Fundamental Research
Funds for the Central Universities.
The research of  Sze was supported by a Hong Kong RGC grant PolyU 502512
and a PolyU central research grant G-YBCR.
This work began when Huang was working as a Postdoctoral Fellow at
The Hong Kong Polytechnic University. He thanks colleagues of HK PolyU for their hospitality and support.

\bigskip\noindent
{\bf Addresses}

\noindent
(Z. Huang) College of Mathematics and Econometrics, Hunan University, Changsha  410082, P.R. China. (Email: mathzejun@gmail.com)

\noindent(S. Shi) Department of Applied Mathematics, The Hong Kong Polytechnic University,
Hung Hom, Hong Kong. (sy.shi@connect.polyu.hk)

\noindent(N.S. Sze) Department of Applied Mathematics, The Hong Kong Polytechnic University,
Hung Hom, Hong Kong. (raymond.sze@polyu.edu.hk)
\end{document}